\newtheorem{cs}{Case}
\def\Lip{\mathrm{Lip}}
\def\Lipo{\mathrm{Lip}_0}
\def\lip{\mathrm{lip}}
\def\lipo{\mathrm{lip}_0}
\def\N{\mathbb{N}}
\def\K{\mathbb{K}}
\begin{document}
\title[Weakly compact composition operators on spaces of Lipschitz functions]{Weakly compact composition operators \\ on spaces of Lipschitz functions}
\author{A. Jim\'{e}nez-Vargas}
\address{Departamento de \'{A}lgebra y An\'{a}lisis Matem\'{a}tico, Universidad de Almer\'{i}a, 04120 Almer\'{i}a, Spain}
\email{ajimenez@ual.es}
\date{\today}
\subjclass{47B33; 47B07; 26A16}

\keywords{Composition operator; weakly compact operator; Lipschitz function}

\begin{abstract}
Let $X$ be a pointed compact metric space. Assuming that $\lipo(X)$ has the uniform separation property, we prove that every weakly compact composition operator on spaces of Lipschitz functions $\Lipo(X)$ and $\lipo(X)$ is compact.
\end{abstract}
\maketitle

\section{Introduction}

A \emph{composition operator} $C_\phi$ on a function space $F(X)$ over a set $X$ is a linear operator from $F(X)$ into itself defined by $C_\phi(f)=f\circ\phi$, where $\phi$ is a map from $X$ into $X$ called the \emph{symbol} of $C_\phi$. Boundedness and compactness of operators $C_\phi$ have been intensively studied in terms of the properties of $\phi$ for different function spaces. See the monograph by Singh and Manhas \cite{sm} and the references therein for a comprehensive treatment of this subject. 

Our aim in this paper is to study weakly compact composition operators on spaces of Lipschitz functions. Let $X$ and $Y$ be metric spaces. We use the letter $d$ to denote the distance in any metric space. A map $f\colon X\to Y$ is said to be \emph{Lipschitz} if 
$$ 
\sup\left\{\frac{d(f(x),f(y))}{d(x,y)}\colon x,y\in X,\, x\neq y\right\}<\infty ;
$$
and \emph{supercontractive} if 
$$
\lim_{d(x,y)\to 0}\frac{d(f(x),f(y))}{d(x,y)}=0,
$$
meaning that the following property holds:
$$
\forall \varepsilon>0,\; \exists \delta>0\colon x,y\in X,\, 0<d(x,y)<\delta \Rightarrow \frac{d(f(x),f(y))}{d(x,y)}<\varepsilon .
$$
Constant maps are Lipschitz and supercontractive but, for example, the identity function on $\mathbb{R}$ is Lipschitz, but not supercontractive; whereas the function $n\mapsto n^2$ on $\mathbb{N}$ is supercontractive, but not Lipschitz. A supercontractive Lipschitz function is often called a \emph{little Lipschitz function}.

Let $X$ be a pointed compact metric space with a base point which we always will denote by $0$, and let $\mathbb{K}$ be the field of real or complex numbers. The \emph{Lipschitz space} $\Lipo(X)$ is the Banach space of all Lipschitz functions $f\colon X\to\K$ for which $f(0)=0$, endowed with the Lipschitz norm
$$
\Lip_d(f)=\sup\left\{\frac{\left|f(x)-f(y)\right|}{d(x,y)}\colon x,y\in X, \; x\neq y\right\},
$$
and the \emph{little Lipschitz space} $\lipo(X)$ is the closed subspace of $\Lipo(X)$ formed by all little Lipschitz functions. These spaces have been largely investigated along the time. We refer the reader to Weaver's book \cite{wea} for a complete study on them.

There are $\lipo$ spaces containing only the zero function as, for instance, $\lipo[0,1]$ with the usual metric, but there exist also some large classes of $\lip_0$ spaces which separate points, even uniformly, in the sense introduced by Weaver in \cite[Definition 3.2.1]{wea} as follows. Given a pointed compact metric space $X$, it is said that $\lipo(X)$ \emph{separates points uniformly} if there exists a constant $a>1$ such that, for every $x,y\in X$, there exists $f\in\lipo(X)$ with $\Lip_d(f)\leq a$ such that $f(x)=d(x,y)$ and $f(y)=0$. This happens, for example, when $X$ is \emph{uniformly discrete}, meaning that $d(x,y)\geq \delta$ for all $x,y\in X$, $x\neq y$, for some $\delta>0$; or when $X$ is the middle-thirds Cantor set with the metric inherited from $[0,1]$. Also, $\lip_0(X^\alpha)$ has the uniform separation property, where $X^\alpha=(X,d^\alpha)$ for some $0<\alpha <1$ (see \cite[Proposition 3.2.2]{wea}). Lipschitz functions on $X^\alpha$ are called \emph{H\"{o}lder functions of exponent} $\alpha$. It is worth to point out that for any pointed compact metric space $X$, there exists a pointed compact metric space $Y$ such that $\lip_0(Y)$ has the uniform separation property and $\lipo(X)$ is isometrically isomorphic to $\lip_0(Y)$ (see \cite[Corollary 4.4.9]{wea}).

The composition operators on spaces of Lipschitz functions have been studied by some authors. Weaver characterized in \cite[Proposition 1.8.2]{wea} the boundedness of composition operators $C_\phi$ on $\Lipo(X)$ by means of the Lipschitz condition of their symbols, when $X$ is a pointed complete metric space. The completeness on $X$ is not restrictive in view of \cite[Proposition 1.2.3]{wea}. Assuming that $X$ is a compact metric space, Kamowitz and Scheinberg \cite{ks} gave a complete description of compact composition operators $C_\phi$ on both Banach spaces $\Lip(X)$ of scalar-valued Lipschitz functions on $X$ with the norm $\max\{\Lip_d,\left\|\cdot\right\|_\infty\}$ and Banach spaces $\lip(X,d^\alpha)$ $(0<\alpha <1)$ of scalar-valued little Lipschitz functions on $X^\alpha$ with the norm $\max\{\Lip_{d^\alpha},\left\|\cdot\right\|_\infty\}$, in terms of the supercontractive property of their symbols. For pointed metric spaces $X$, not necessarily compact, this characterization was extended in \cite{jv13} to both spaces $\lipo(X)$ satisfying the uniform separation property on bounded subsets of $X$ and spaces $\Lipo(X)$. In \cite{bj11}, Botelho and Jamison provided a characterization of compact weighted composition operators on spaces of vector valued Lipschitz functions. 

Our aim in this paper is to prove that every weakly compact composition operator $C_\phi$ on both $\lipo(X)$ and $\Lipo(X)$ is compact provided that $\lipo(X)$ has the uniform separation property. The key tool to prove this result is the fact, stated in \cite[Corollary 3.3.5]{wea}, that $\lipo(X)$ has the uniform separation property if and only if $\lipo(X)^{**}$ is isometrically isomorphic to $\Lipo(X)$.

\section{The results}

We prepare the proof of our main result by stating first a new characterization of supercontractive functions. 
Given a metric space $X$, for $x\in X$ and $r>0$, we denote by $B(x,r)$ the open ball in $X$ of center $x$ and radius $r$. See \cite{jllv13} for an analogous characterization of Lipschitz functions.  

\begin{lemma}\label{lem1}
Let $X$ be a pointed compact metric space and let $\phi\colon X\to X$ be a continuous map. If $\phi$ is not supercontractive, then there exist a real number $\varepsilon>0$, two sequences $\{x_n\}$ and $\{y_n\}$ in $X$ converging to a point $x_0\in X$ such that $0<d(x_n,y_n)<1/n$ and $\varepsilon<d(\phi(x_n),\phi(y_n))/d(x_n,y_n)$ for all $n\in\N$, and a function $f\in\Lipo(X)$ such that $f(\phi(x_n))=d(\phi(x_n),\phi(y_n))$ and $f(\phi(y_n))=0$ for all $n\in\N$.
\end{lemma}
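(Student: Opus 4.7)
My plan is to extract the sequences $\{x_n\},\{y_n\}$ directly from the failure of the supercontractive condition together with compactness of $X$, and then build $f$ by specifying its values on the countable set $\{\phi(x_n),\phi(y_n),\phi(x_0),0\}$ and invoking the McShane extension theorem.

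\emph{Extracting the sequences.} Negating the $\varepsilon$--$\delta$ definition of supercontractivity produces $\varepsilon>0$ and points $u_n,v_n\in X$ with $0<d(u_n,v_n)<1/n$ and $d(\phi(u_n),\phi(v_n))>\varepsilon\, d(u_n,v_n)$. Compactness of $X$ yields a subsequence with $u_{n_k}\to x_0$, and then $v_{n_k}\to x_0$ since $d(u_{n_k},v_{n_k})\to 0$; after relabelling I obtain the desired $\{x_n\},\{y_n\}$. Setting $a_n=\phi(x_n)$, $b_n=\phi(y_n)$, $\delta_n=d(a_n,b_n)$, continuity of $\phi$ gives $a_n,b_n\to c:=\phi(x_0)$ and $\delta_n\to 0$.

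\emph{Building $f$.} I will construct a Lipschitz function $g$ on $A=\{a_n,b_n:n\in\N\}\cup\{c,0\}$ by $g(a_n)=\delta_n$, $g(b_n)=0$, $g(c)=g(0)=0$, and then extend $g$ via McShane to an $f\in\Lipo(X)$ with the required values. To make $g$ Lipschitz, I first swap $x_n\leftrightarrow y_n$ on the indices where $d(b_n,c)>d(a_n,c)$ (this preserves every relevant condition), so that $d(a_n,c)\geq d(b_n,c)$, which forces $d(a_n,c)\geq\delta_n/2$. After dropping finitely many terms I may assume either $b_n=c$ for all $n$ (the easy sub-case, handled by simply thinning $\delta_n$ to decrease rapidly) or $b_n\neq c$ for all $n$, in which case, writing $\beta_n=d(b_n,c)>0$, I pass to a further subsequence with
\[
\max\{\delta_m,\,d(a_m,c),\,\beta_m\}<\tfrac{1}{16}\min\{\delta_n,\beta_n\}\qquad\text{for all } m>n.
\]

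\emph{Verification and main obstacle.} A case-check using the reverse triangle inequality then shows that every ratio $|g(p)-g(q)|/d(p,q)$ with $p,q\in A$ is bounded by an absolute constant. The delicate case is $\delta_n/d(a_n,b_m)$ in the regime $n>m$, where both $a_n$ and $b_m$ cluster near $c$: the thinning gives $d(a_n,b_m)\geq\beta_m-d(a_n,c)\geq 15\beta_m/16$ and $\delta_n<\beta_m/16$, bounding the ratio by $1/15$; the remaining pairs are handled similarly by bounds of the form $d(a_n,\cdot)\geq d(a_n,c)-d(\cdot,c)\geq 7\delta_n/16$. The chief obstacle is precisely this thinning: without the preparatory pairwise swap ensuring $d(a_n,c)\geq\beta_n$ and without tracking $\beta_n$ alongside $\delta_n$, the lower bound $d(a_n,b_m)\geq\beta_m-d(a_n,c)$ becomes useless, since $a_n$ and $b_m$ may both sit arbitrarily close to $c$. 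Once $g$ is Lipschitz on $A$, McShane's theorem extends it to a real-valued Lipschitz function on $X$ with $f(0)=0$, i.e.\ an element of $\Lipo(X)$ with the prescribed values on $\{\phi(x_n)\},\{\phi(y_n)\}$.
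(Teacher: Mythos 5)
Your proposal is correct in its essentials, but it produces the function $f$ by a genuinely different route from the paper. The paper performs the same preparatory swap (arranging $d(\phi(y_n),\phi(x_0))\leq d(\phi(x_n),\phi(x_0))$) and a geometric thinning of the sequence, and its two cases (the indices with $\phi(p_n)=\phi(x_0)$ or $\phi(q_n)=\phi(x_0)$ forming an infinite set, versus both sets finite) correspond exactly to your sub-cases $b_n=c$ versus $b_n\neq c$. But instead of prescribing values on the countable set $A$ and invoking McShane, the paper builds $f$ explicitly: the thinning is exploited to produce pairwise disjoint balls $B(\phi(x_n),r_n)$ that avoid every $\phi(y_m)$, and $f$ is (a cutoff of) the sum $\sum_n d(\phi(x_n),\phi(y_n))\,h_n$ of tent functions supported in those balls, each term having Lipschitz constant at most $4$. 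Your McShane route buys a shorter construction at the price of the pairwise verification of the ratios on $A$, which you have identified and carried out correctly in the delicate regime ($\delta_n/d(a_n,b_m)$ for $n>m$); the paper's explicit sum gives a uniform Lipschitz bound with no pairwise case-check, at the price of constructing the disjoint balls. Both hinge on the same underlying idea: track $d(\phi(y_n),\phi(x_0))$ alongside $\delta_n$ and thin geometrically.

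A few loose ends you should tighten, none affecting the substance. First, your dichotomy should be obtained by passing to a subsequence, not merely by dropping finitely many terms: if $b_n=c$ for infinitely many but not cofinitely many $n$, discarding an initial segment yields neither alternative; passing to a subsequence is harmless since after relabelling $d(x_{n_k},y_{n_k})<1/n_k\leq 1/k$. Second, since the base point lies in $A$ with $g(0)=0$, you must rule out $a_n=0$, which would make $g$ ill-defined (it would require $g(a_n)=\delta_n>0$ as well): if $c=0$ this is automatic because $d(a_n,c)\geq\delta_n/2>0$, and if $c\neq 0$ only finitely many $a_n$ can equal $0$, so discard those indices. (The paper handles the base point differently, multiplying by a cutoff vanishing near $0$ when $\phi(x_0)\neq 0$.) Third, for pairs of $A$ involving $0$ when $c\neq 0$, the ratios are bounded but not by an absolute constant independent of the data; boundedness is of course all that is needed for $g$ to be Lipschitz and for the McShane extension to lie in $\Lipo(X)$.
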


\begin{proof}
Since $\phi$ is not supercontractive, we can take a real number $\varepsilon>0$ and two sequences $\{p_n\}$ and $\{q_n\}$ in $X$ such that $0<d(p_n,q_n)<1/n$ and $\varepsilon<d(\phi(p_n),\phi(q_n))/d(p_n,q_n)$ for all $n\in\N$. Notice that $\phi(p_n)\neq \phi(q_n)$ for all $n\in\N$. Since $X$ is compact, passing to a subsequence if necessary, we may suppose that $\{p_n\}$ converges to $x_0\in X$. Since $d(q_n,x_0)\leq d(q_n,p_n)+d(p_n,x_0)\leq 1/n+d(p_n,x_0)$ for all $n\in\N$, $\{q_n\}$ also converges to $x_0$.

We will construct two sequences $\{x_n\}$ and $\{y_n\}$ in $X$ converging to $x_0$ such that 
$$
0<d(x_n,y_n)<\frac{1}{n},\qquad \varepsilon<\frac{d(\phi(x_n),\phi(y_n))}{d(x_n,y_n)},\qquad d(\phi(y_n),\phi(x_0))\leq d(\phi(x_n),\phi(x_0))
$$
for all $n\in\N$. In addition, we will prove that there exists a sequence of pairwise disjoint open balls $\{B(\phi(x_n),r_n)\}$ such that $\phi(y_n)\notin\bigcup_{j=1}^{\infty}B(\phi(x_j),r_j)$ for all $n\in\N$. To this end, we consider the sets 
$$
A=\left\{n\in\N \colon\phi(p_n)=\phi(x_0)\right\},\qquad B=\left\{n\in\N \colon\phi(q_n)=\phi(x_0)\right\}
$$
and distinguish two cases.

\begin{cs} Suppose that $A$ or $B$ are infinite. If $A$ is infinite, then there exists a strictly increasing map $\sigma\colon\N\to\N$ such that $\phi(p_{\sigma(n)})=\phi(x_0)$ for all $n\in\N$. Notice that $\phi(q_{\sigma(n)})\neq\phi(p_{\sigma(n)})=\phi(x_0)$ for each $n\in\N$ and $\phi(q_{\sigma(n)})\to\phi(x_0)$. Hence there is a subsequence $\{q_{\sigma(\upsilon(n))}\}$ such that $d(\phi(q_{\sigma(\upsilon(n+1))}),\phi(x_0))<(1/3)d(\phi(q_{\sigma(\upsilon(n))}),\phi(x_0))$ for all $n\in\N$. For each $n\in\N$, define $x_n=q_{\sigma(\upsilon(n))}$ and $y_n=p_{\sigma(\upsilon(n))}$. Then we have 
$$
0<d(x_n,y_n)<\frac{1}{\sigma(\upsilon(n))}\leq\frac{1}{n},\qquad \varepsilon<\frac{d(\phi(x_n),\phi(y_n))}{d(x_n,y_n)},\qquad d(\phi(y_n),\phi(x_0))=0<d(\phi(x_n),\phi(x_0)).
$$
Moreover, $d(\phi(x_{n+1}),\phi(x_0))<(1/3)d(\phi(x_n),\phi(x_0))$ for all $n\in \N$. Set 
$$
r_n=\frac{1}{2}\min\left\{d(\phi(x_n),\phi(x_0)),d(\phi(x_n),\phi(y_n))\right\}
$$
for each $n\in\N$. As $\phi(y_n)=\phi(x_0)$ for all $n\in \N$, it follows that $r_n=d(\phi(x_n),\phi(x_0))/2$. Note that if $n<m$, then $r_m<r_n/3$ and $d(x,\phi(x_0))<3r_m<r_n$ for any $x\in B(\phi(x_m),r_m)$. This implies that, for each $n\in\N$ and any $m>n$, we have $B(\phi(x_m),r_m)\subset B(\phi(x_0),r_n)$. As $B(\phi(x_n),r_n)\cap B(\phi(x_0),r_n)=\varnothing$ for all $n\in\N$, we conclude that the balls $B(\phi(x_n),r_n)$ are pairwise disjoint and $\phi(y_n)=\phi(x_0)\not\in\bigcup_{j=1}^{\infty}B(\phi(x_j),r_j)$ for all $n\in\N$. Therefore $\{x_n\}$ and $\{y_n\}$ satisfy the required conditions. The same argument works if $B$ is infinite.
\end{cs}

\begin{cs}
Suppose that $A$ and $B$ are both finite. Let $r=\max(A\cup B)$. Note that $\phi(p_{n+r})\neq\phi(x_0)$ and $\phi(q_{n+r})\neq\phi(x_0)$ for all $n\in \N$. Define the sequences $\{t_n\}$ and $\{s_n\}$ by
\begin{align*}
t_n&=\left\{
      \begin{array}{ll}
        p_{n+r} & \text{ if } d(\phi(q_{n+r}),\phi(x_0))\leq d(\phi(p_{n+r}),\phi(x_0)), \\
        q_{n+r} & \text{ if } d(\phi(p_{n+r}),\phi(x_0))< d(\phi(q_{n+r}),\phi(x_0)),
      \end{array}
    \right.\\
s_n&=\left\{
      \begin{array}{ll}
        q_{n+r} & \text{ if } d(\phi(q_{n+r}),\phi(x_0))\leq d(\phi(p_{n+r}),\phi(x_0)), \\
       p_{n+r} & \text{ if } d(\phi(p_{n+r}),\phi(x_0))< d(\phi(q_{n+r}),\phi(x_0)).
      \end{array}
    \right.
\end{align*}
Note that $d(\phi(s_n),\phi(x_0))\leq d(\phi(t_n),\phi(x_0))$ for all $n\in\N$. As $\{t_n\}$ converges to $x_0$, take a subsequence $\{t_{\sigma(n)}\}$ for which 
$$
d(\phi(t_{\sigma(n+1)}),\phi(x_0))<\frac{1}{3}\min\{d(\phi(s_{\sigma(n)}),\phi(x_0)),d(\phi(t_{\sigma(n)}),\phi(s_{\sigma(n)})) \}
$$
for all $n\in\N$. Let $x_n=t_{\sigma(n)}$ and $y_n=s_{\sigma(n)}$. Then  
$$
0<d(x_n,y_n)<\frac{1}{\sigma(n)+r}<\frac{1}{\sigma(n)}\leq\frac{1}{n},\qquad \varepsilon<\frac{d(\phi(x_n),\phi(y_n))}{d(x_n,y_n)},\qquad d(\phi(y_n),\phi(x_0))\leq d(\phi(x_n),\phi(x_0))
$$
for all $n\in\N$. Moreover, a straightforward induction yields that, for each $n\in\N$ and any $m>n$, 
$$
d(\phi(x_m),\phi(x_0))<\frac{1}{3}\min\left\{d(\phi(y_n),\phi(x_0)),d(\phi(x_n),\phi(y_n))\right\}.
$$
For each $n\in\N$, take  
$$
r_n=\frac{1}{2}\min\left\{d(\phi(x_n),\phi(x_0)),d(\phi(x_n),\phi(y_n))\right\}.
$$ 
Fix $n,m\in\N$ such that $m>n$. As $d(\phi(x_m),\phi(x_0))<d(\phi(y_n),\phi(x_0))/3\leq d(\phi(x_n),\phi(x_0))/3$ and $d(\phi(x_m),\phi(x_0))<d(\phi(x_n),\phi(y_n))/3$, we have $d(\phi(x_m),\phi(x_0))<2r_n/3$. Also, $r_m\leq d(\phi(x_m),\phi(x_0))/2<r_n/3$ and it is easy to check that $B(\phi(x_m),r_m)\subset B(\phi(x_0),r_n)$. Since $B(\phi(x_n),r_n)\cap B(\phi(x_0),r_n)=\varnothing$, it follows that $B(\phi(x_n),r_n)\cap B(\phi(x_m),r_m)=\varnothing$. Moreover, as $d(\phi(y_m),\phi(x_0))\leq d(\phi(x_m),\phi(x_0))<2r_n/3$, it is clear that $\phi(y_m)\notin B(\phi(x_n),r_n)$. Finally, from the inequalities
\begin{align*}
r_m & \leq \frac{d(\phi(x_m),\phi(x_0))}{2}<\frac{d(\phi(y_n),\phi(x_0))}{6}<d(\phi(y_n),\phi(x_0))-\frac{d(\phi(y_n),\phi(x_0))}{3}\\
& < d(\phi(y_n),\phi(x_0))-d(\phi(x_m),\phi(x_0))\leq d(\phi(y_n),\phi(x_m)),
\end{align*}
we infer that $\phi(y_n)\not\in B(\phi(x_m),r_m)$. Then we can conclude that the balls $B(\phi(x_n),r_n)$ are pairwise disjoint and $\phi(y_n) \not\in \bigcup_{j=1}^\infty B(\phi(x_j),r_j)$ for all $n\in\N$.
\end{cs}

Finally, we prove that there exists a function $f\in\Lipo(X)$ such that $f(\phi(x_n))=d(\phi(x_n),\phi(y_n))$ and $f(\phi(y_n))=0$ for all $n\in\N$. Indeed, for each $n$, let 
$$
h_n(x)=\max\left\{0,1-\frac{d(x,\phi(x_n))}{r_n}\right\}.
$$
Notice that $h_n$ is Lipschitz with $\Lip_d(h_n)\leq 1/r_n$, $h_n(\phi(x_n))=1$ and $h_n(x)=0$ for all $x\in X\setminus B(\phi(x_n),r_n)$ (see \cite[Lemma 2.1]{VLV09}). Define $g\colon X\to\mathbb{R}$ by
$$
g(x)=\sum_{n=1}^{\infty} d(\phi(x_n),\phi(y_n))h_n(x).
$$
Observe that $g(x)=0$ for any $x\notin\bigcup_{j=1}^\infty B(\phi(x_j),r_j)$ and so $g(\phi(y_n))=0$ for all $n\in\N$. As the balls $B(\phi(x_n),r_n)$ are pairwise disjoint, if $x\in\bigcup_{j=1}^\infty B(\phi(x_j),r_j)$, then $g(x)=d(\phi(x_m),\phi(y_m))h_m(x)$ for some fixed $m \in \N$ (depending only on $x$) and, in particular, $g(\phi(x_n))=d(\phi(x_n),\phi(y_n))$ for all $n\in\N$. Moreover, as $d(\phi(x_n),\phi(y_n))\leq 2 d(\phi(x_n),\phi(x))$, we have that $d(\phi(x_n),\phi(y_n))\leq 4r_n$, hence it must be that $\Lip_d(d(\phi(x_n),\phi(y_n))h_n)\leq 4$ and so $g$ is Lipschitz. 

Finally, if $\phi(x_0)=0$, then $g(\phi(y_n))\to g(0)$ by the continuity of $\phi$ and $g$, but since $g(\phi(y_n))=0$ for all $n$, it follows that $g(0)=0$ and so $g\in\Lipo(X)$. Hence we can take $f=g$ and the lemma follows. Otherwise, if $\phi(x_0)\neq 0$, take $\varepsilon=d(\phi(x_0),0)/2>0$. Since $\{\phi(x_n)\}$ converges to $\phi(x_0)$, there exists $m\in\N$ such that $\varepsilon\leq d(\phi(x_{n+m}),0)$ for all $n\in\N$. Then the sequences $\{x_{n+m}\}$ and $\{y_{n+m}\}$ and the function $f\colon X\to\mathbb{R}$, defined by 
$$
f(x)=\left(1 -\max\left\{0,1-\frac{d(x,0)}{\varepsilon}\right\}\right)g(x),
$$
satisfy the required conditions in the lemma.
\end{proof}


A second tool that we will use to prove the main result is the fact that the Arens product on $\lipo(X)^{**}$ coincides with the pointwise product on $\lipo(X)^*$ whenever $\lipo(X)$ separates points uniformly (see \cite[Theorem 3.8]{bcd} for the case $\lip(X^\alpha)$ with $0<\alpha<1$).

Let $A$ be a commutative Banach algebra. The \emph{Arens product} on $A^{**}$ is defined in stages as follows (see \cite{arens}). For any $a,b\in A$, $f\in A^*$ and $F,G\in A^{**}$, define 
$$
(f\diamond a)(b)= f(ab),\qquad (F\diamond f)(a)= F(f\diamond a),\qquad (F\diamond G)(f)= F(G\diamond f).
$$
Then $A^{**}$ is a Banach algebra under this product and it is denoted by $(A^{**},\diamond)$. The algebra $A$ is said to be \emph{Arens regular} if the algebra $(A^{**},\diamond)$ is commutative. 

By \cite[Theorems 3.3.3 and 2.2.2]{wea}, the space $\lipo(X)^{**}$ is isometrically isomorphic to $\Lipo(X)$ provided that $\lipo(X)$ separates points uniformly, under the isometric isomorphism $\Phi\colon\lipo(X)^{**}\to\Lipo(X)$ defined by 
\begin{equation}\label{patri}
\Phi(F)(x)=F(\delta_x)\qquad (F\in\lipo(X)^{**}, \ x\in X).
\end{equation}
In fact, this identification is the most natural since 
\begin{equation}\label{alex}
\Phi(Q_X(f))(x)=(Q_X(f))(\delta_x)=\delta_x(f)=f(x) \quad (f\in\lipo(X), \ x\in X),
\end{equation}
where $Q_X$ denotes the canonical injection from $\lipo(X)$ into $\lipo(X)^{**}$. We now can prove the following. 
 
\begin{lemma}\label{lem3}
Let $X$ be a pointed compact metric space and assume that $\lipo(X)$ separates points uniformly. Then the Arens product on $\lipo(X)^{**}$ coincides with the pointwise product on $\lipo(X)^*$, and so $\lipo(X)$ is Arens regular.
\end{lemma}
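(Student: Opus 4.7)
The plan is a direct computation, exploiting the fact that the point evaluations $\delta_x$ are multiplicative linear functionals on $\lipo(X)$, combined with the explicit form of the isomorphism $\Phi$ given in \eqref{patri}. Concretely, I would show that the diagram commutes in the obvious sense: $\Phi(F\diamond G)=\Phi(F)\Phi(G)$ for all $F,G\in\lipo(X)^{**}$, where the right-hand side is the pointwise product in $\Lipo(X)$ (which is well defined since compactness of $X$ makes every element of $\Lipo(X)$ bounded, so the product of two Lipschitz functions is again Lipschitz).

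The computation proceeds in three stages, matching the three-stage definition of the Arens product. First, for $x\in X$ and $a\in\lipo(X)$, unfolding the definition gives
\begin{equation*}
(\delta_x\diamond a)(b)=\delta_x(ab)=a(x)b(x)=a(x)\delta_x(b)\qquad (b\in\lipo(X)),
\end{equation*}
so $\delta_x\diamond a=a(x)\delta_x$. Second, for $G\in\lipo(X)^{**}$ and $x\in X$,
\begin{equation*}
(G\diamond\delta_x)(a)=G(\delta_x\diamond a)=G(a(x)\delta_x)=a(x)G(\delta_x)=\Phi(G)(x)\,\delta_x(a),
\end{equation*}
so $G\diamond\delta_x=\Phi(G)(x)\delta_x$. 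Third, for $F,G\in\lipo(X)^{**}$ and $x\in X$,
\begin{equation*}
\Phi(F\diamond G)(x)=(F\diamond G)(\delta_x)=F(G\diamond\delta_x)=F\bigl(\Phi(G)(x)\delta_x\bigr)=\Phi(G)(x)F(\delta_x)=\Phi(F)(x)\Phi(G)(x).
\end{equation*}

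Since $\Phi$ is a linear bijection onto $\Lipo(X)$, the identity $\Phi(F\diamond G)=\Phi(F)\Phi(G)$ shows that $\Phi$ is an algebra isomorphism from $(\lipo(X)^{**},\diamond)$ onto $\Lipo(X)$ with pointwise multiplication. Because pointwise multiplication on $\Lipo(X)$ is commutative, so is the Arens product on $\lipo(X)^{**}$, and hence $\lipo(X)$ is Arens regular.

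There is no real obstacle in this argument; everything is driven by the multiplicativity $\delta_x(ab)=a(x)b(x)$ of point evaluations. The only care needed is bookkeeping: at each stage one must keep track of which of the objects produced by $\diamond$ live in $\lipo(X)$, in $\lipo(X)^*$, and in $\lipo(X)^{**}$, and one must invoke \eqref{patri} at the correct moment to convert $F(\delta_x)$ into $\Phi(F)(x)$.
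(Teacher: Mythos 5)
Your proposal is correct and follows essentially the same route as the paper: the identical three-stage computation $\delta_x\diamond a=a(x)\delta_x$, $G\diamond\delta_x=G(\delta_x)\delta_x$, $(F\diamond G)(\delta_x)=F(\delta_x)G(\delta_x)$. The only cosmetic difference is the final step, where you conclude via injectivity of $\Phi$ while the paper invokes the density of the linear span of $\{\delta_x\colon x\in X\}$ in $\lipo(X)^*$ — but these rest on the same fact from Weaver's Theorem 3.3.3, so the arguments coincide in substance.
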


\begin{proof}
Let $x\in X$, $f,g\in\lipo(X)$ and $F,G\in\lipo(X)^{**}$. First, we have
$$
(\delta_x\diamond f)(g)=\delta_x(fg)=f(x)g(x)=f(x)\delta_x(g),
$$
and so $\delta_x\diamond f=f(x)\delta_x$. Now, using (\ref{patri}), we compute 
$$
(G\diamond\delta_x)(f)
=G(\delta_x\diamond f)
=G(f(x)\delta_x)
=f(x)G(\delta_x)
=G(\delta_x)\delta_x(f),
$$
and so $G\diamond\delta_x=G(\delta_x)\delta_x$. Finally, we have
$$
(F\diamond G)(\delta_x)
=F(G\diamond\delta_x)
=F(G(\delta_x)\delta_x)
=F(\delta_x)G(\delta_x)
=(FG)(\delta_x).
$$
Since the closed linear hull of the set $\{\delta_x\colon x\in X\}$ in $\Lipo(X)^*$ coincides with $\lipo(X)^{*}$ by \cite[Theorem 3.3.3]{wea}, we infer that $(F\diamond G)(\gamma)=(FG)(\gamma)$ for all $\gamma\in\lipo(X)^*$, and so $F\diamond G=FG$, as desired.
\end{proof}

We now are ready to prove the main result of this paper.

\begin{theorem}\label{teo1}
Let $X$ be a pointed compact metric space and let $\phi\colon X\to X$ be a base point-preserving Lipschitz map. Suppose that $\lipo(X)$ separates points uniformly. Then every weakly compact operator $C_{\phi}\colon\lipo(X)\to\lipo(X)$ is compact.
\end{theorem}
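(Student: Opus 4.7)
The strategy is to prove the contrapositive at the level of the symbol: I will show that if $C_\phi$ is weakly compact, then $\phi$ must be supercontractive, and then invoke the known characterization of compact composition operators on $\lipo(X)$ from \cite{jv13} (which applies because $\lipo(X)$ separates points uniformly) to conclude that $C_\phi$ is compact.

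The main tool is the identification of the bidual operator $C_\phi^{**}$ as a composition operator on $\Lipo(X)$. Using $\Phi\colon\lipo(X)^{**}\to\Lipo(X)$ from (\ref{patri}), I first compute $C_\phi^*\delta_x = \delta_{\phi(x)}$ in $\lipo(X)^*$, and then
$$
\Phi(C_\phi^{**}F)(x) = (C_\phi^{**}F)(\delta_x) = F(\delta_{\phi(x)}) = \Phi(F)(\phi(x))
$$
for every $F\in\lipo(X)^{**}$ and $x\in X$. Thus $\Phi\circ C_\phi^{**}\circ\Phi^{-1}$ is the composition operator $\widetilde{C}_\phi\colon\Lipo(X)\to\Lipo(X)$, which is well defined and bounded since $\phi$ is a base point-preserving Lipschitz map. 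By Gantmacher's theorem, $C_\phi$ is weakly compact if and only if $C_\phi^{**}(\lipo(X)^{**})\subseteq Q_X(\lipo(X))$; using $\Phi$ together with (\ref{alex}), this is equivalent to $\widetilde{C}_\phi(\Lipo(X))\subseteq\lipo(X)$, that is, $f\circ\phi\in\lipo(X)$ for every $f\in\Lipo(X)$.

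The heart of the argument, and where Lemma \ref{lem1} is decisive, is to refute the above implication when $\phi$ fails to be supercontractive. Assume weak compactness and suppose, for contradiction, that $\phi$ is not supercontractive. Lemma \ref{lem1} then supplies $\varepsilon>0$, sequences $\{x_n\}$ and $\{y_n\}$ in $X$ converging to some $x_0$ with $0<d(x_n,y_n)<1/n$ and $d(\phi(x_n),\phi(y_n))/d(x_n,y_n)>\varepsilon$, and a function $f\in\Lipo(X)$ satisfying $f(\phi(x_n))=d(\phi(x_n),\phi(y_n))$ and $f(\phi(y_n))=0$ for all $n\in\N$. Then the composed function $f\circ\phi$ obeys
$$
\frac{|(f\circ\phi)(x_n)-(f\circ\phi)(y_n)|}{d(x_n,y_n)} = \frac{d(\phi(x_n),\phi(y_n))}{d(x_n,y_n)}>\varepsilon
$$
for every $n$, while $d(x_n,y_n)\to 0$. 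Consequently $f\circ\phi\notin\lipo(X)$, contradicting the conclusion of the previous paragraph.

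Therefore $\phi$ is supercontractive, and the characterization of compact composition operators on $\lipo(X)$ under the uniform separation hypothesis yields that $C_\phi$ is compact. The main obstacle in this plan is precisely the reduction built into Lemma \ref{lem1}: one must produce a \emph{single} function $f\in\Lipo(X)$ detecting the failure of supercontractivity simultaneously along the whole sequence, which is exactly what the pairwise disjoint balls $B(\phi(x_n),r_n)$ provided by Lemma \ref{lem1} make possible.
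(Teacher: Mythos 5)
Your proposal is correct, and its overall skeleton (Gantmacher, the equivalence ``$C_\phi$ weakly compact $\Leftrightarrow$ $f\circ\phi\in\lipo(X)$ for every $f\in\Lipo(X)$'', then Lemma \ref{lem1} plus the characterization from \cite{jv13}) matches the paper. Where you genuinely diverge is in how you identify $\Phi C_\phi^{**}\Phi^{-1}$ with a composition operator on $\Lipo(X)$: you compute directly that $C_\phi^*\delta_x=\delta_{\phi(x)}$, hence $\Phi(C_\phi^{**}F)(x)=F(\delta_{\phi(x)})=\Phi(F)(\phi(x))$, so the bidual operator is $g\mapsto g\circ\phi$ on $\Lipo(X)$ with the \emph{same} symbol $\phi$. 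The paper instead treats $\lipo(X)^{**}$ as a Banach algebra under the Arens product, proves via Lemma \ref{lem3} that this product is the pointwise one, deduces that $\Phi C_\phi^{**}\Phi^{-1}$ is an algebra endomorphism, invokes Weaver's classification (Corollary 4.5.6 of \cite{wea}) to write it as $C_\varphi$, and then shows $\varphi=\phi$ using $Q_XC_\phi=C_\phi^{**}Q_X$ and the fact that $\lipo(X)$ separates points. Your computation is shorter and more elementary, and it renders Lemma \ref{lem3} and the endomorphism classification unnecessary for the theorem; in fact the paper itself performs essentially your calculation later, in the proof of Corollary \ref{cor1}, to show $\Phi^{-1}C_\phi\Phi=C_\phi^{**}$ (there with a density-of-$\{\delta_x\}$ step that your pointwise formulation of $\Phi$ does not even need). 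What the paper's heavier route buys is the algebraic viewpoint and the Arens regularity statement of Lemma \ref{lem3}, which has independent interest, but for the theorem itself your argument is complete as written.
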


\begin{proof}
By Gantmacher's Theorem, the operator $C_{\phi}\colon \lipo(X)\to\lipo(X)$ is weakly compact if and only if $C_{\phi}^{**}(\lipo(X)^{**})$ is contained in $Q_X(\lipo(X))$. 

We claim that this inclusion means that $C_{\phi}(\Lipo(X))$ is contained in $\lipo(X)$. To prove this, consider $\lipo(X)^{**}$ as a Banach algebra with the Arens product $\diamond$. Since $C_{\phi}$ is an algebra endomorphism of $\lipo(X)$, then $C_{\phi}^{**}$ is an algebra endomorphism of $(\lipo(X)^{**},\diamond)$. Indeed, for any $F,G\in\lipo(X)^{**}$ and $\gamma\in\lipo(X)^*$, it is clear that 
$C_{\phi}^{**}(F)(\gamma)=(F\circ C_{\phi}^*)(\gamma)=F(\gamma\circ C_{\phi})$, and, by applying Lemma \ref{lem3}, we have 
\begin{gather*}
C_{\phi}^{**}(F\diamond G)(\gamma)
=(F\diamond G)(\gamma\circ C_{\phi})
=(FG)(\gamma\circ C_{\phi})
=F(\gamma\circ C_{\phi})G(\gamma\circ C_{\phi})\\
=C_{\phi}^{**}(F)(\gamma)C_{\phi}^{**}(G)(\gamma)
=(C_{\phi}^{**}(F)C_{\phi}^{**}(G))(\gamma)
=(C_{\phi}^{**}(F)\diamond C_{\phi}^{**}(G))(\gamma).
\end{gather*}
Note also that $\Phi$ is an algebra homomorphism from $(\lipo(X)^{**},\diamond)$ to $\Lipo(X)$ since 
$$
\Phi(F\diamond G)(x)
=(F\diamond G)(\delta_x)
=(FG)(\delta_x)
=F(\delta_x)G(\delta_x)
=\Phi(F)(x)\Phi(G)(x)
=(\Phi(F)\Phi(G))(x)
$$
for all $x\in X$. It follows that $\Phi C_{\phi}^{**}\Phi^{-1}$ is an algebra endomorphism of $\lipo(X)$. By \cite[Corollary 4.5.6]{wea}, $\Phi C_{\phi}^{**}\Phi^{-1}$ is of the form $C_{\varphi}$ for some base point-preserving Lipschitz map $\varphi\colon X\to X$. Using the equalities (\ref{alex}) and $Q_X C_{\phi}=C_{\phi}^{**}Q_X$, we infer that  
$$
C_{\phi}(f)=\Phi(Q_X(C_{\phi}(f)))=\Phi(C_{\phi}^{**}(Q_X(f)))=C_{\varphi}(\Phi(Q_X(f)))=C_{\varphi}(f)
$$
for all $f\in\lipo(X)$. Hence $C_{\phi}=C_{\varphi}$ on $\lipo(X)$, which implies $\phi=\varphi$ since $\lipo(X)$ separates points. It follows that 
$$
C_{\phi}^{**}(\lipo(X)^{**})\subset Q_X(\lipo(X)) 
\Leftrightarrow 
\Phi C_{\phi}^{**}\Phi^{-1}(\Lipo(X))\subset 
\lipo(X) 
\Leftrightarrow C_{\phi}(\Lipo(X))\subset\lipo(X),
$$
and this proves our claim. Hence $C_{\phi}\colon \lipo(X)\to \lipo(X)$ is weakly compact if and only if $f\circ\phi\in\lipo(X)$ for all $f\in\Lipo(X)$. 

Assume now that $C_{\phi}\colon\lipo(X)\to \lipo(X)$ is not compact. Then $\phi$ is not supercontractive by the version of \cite[Theorem 1.3]{jv13} for $\lipo$ spaces (see \cite{ks} for the case $\lip(X^\alpha)$ with $0<\alpha<1$). By applying Lemma \ref{lem1}, there exist a real number $\varepsilon>0$, two sequences $\{x_n\}$ and $\{y_n\}$ in $X$ such that $0<d(x_n,y_n)<1/n$ and $\varepsilon<d(\phi(x_n),\phi(y_n))/d(x_n,y_n)$ for all $n\in\N$ and a function $f\in\Lipo(X)$ such that $f(\phi(x_n))=d(\phi(x_n),\phi(y_n))$ and $f(\phi(y_n))=0$ for all $n\in\N$. Then we have 
$$
\frac{\left|(f\circ\phi)(x_n)-(f\circ\phi)(y_n))\right|}{d(x_n,y_n)}=\frac{d(\phi(x_n),\phi(y_n))}{d(x_n,y_n)}>\varepsilon
$$
for all $n\in\N$. Since $d(x_n,y_n)\to 0$ as $n\to\infty$, we infer that $f\circ\phi$ is not in $\lipo(X)$ and thus $C_{\phi}\colon \lipo(X)\to \lipo(X)$ is not weakly compact. This completes the proof of the theorem.
\end{proof}


From Theorem \ref{teo1} we derive the following.

\begin{corollary}\label{cor1}
Let $X$ be a pointed compact metric space and let $\phi\colon X\to X$ be a base point-preserving Lipschitz map. Suppose that $\lipo(X)$ separates points uniformly. Then every weakly compact operator $C_{\phi}\colon\Lipo(X)\to\Lipo(X)$ is compact.
\end{corollary}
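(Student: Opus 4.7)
The plan is to transfer the problem to $\lipo(X)$ via the isometric isomorphism $\Phi\colon\lipo(X)^{**}\to\Lipo(X)$ in (\ref{patri}) and then invoke Theorem~\ref{teo1}.

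First I would observe that, up to conjugation by $\Phi$, the operator $C_\phi\colon\Lipo(X)\to\Lipo(X)$ coincides with $C_\phi^{**}\colon\lipo(X)^{**}\to\lipo(X)^{**}$. Since $C_\phi^*(\delta_x)(f)=\delta_x(f\circ\phi)=f(\phi(x))=\delta_{\phi(x)}(f)$ for every $f\in\lipo(X)$, we have $C_\phi^*(\delta_x)=\delta_{\phi(x)}$, and consequently, for any $F\in\lipo(X)^{**}$ and $x\in X$,
$$
\Phi(C_\phi^{**}(F))(x)=C_\phi^{**}(F)(\delta_x)=F(\delta_{\phi(x)})=\Phi(F)(\phi(x))=C_\phi(\Phi(F))(x).
$$
Thus $\Phi\circ C_\phi^{**}=C_\phi\circ\Phi$; this intertwining is in fact implicit in the argument of Theorem~\ref{teo1}.

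Once this identification is in place, both directions drop out by standard dualization. By Gantmacher's theorem, the weak compactness of $C_\phi\colon\lipo(X)\to\lipo(X)$ is equivalent to that of $C_\phi^{**}$, which in turn is equivalent, via the isometric conjugation by $\Phi$, to the weak compactness of $C_\phi\colon\Lipo(X)\to\Lipo(X)$. Hence the hypothesis of the corollary pulls back to the statement that $C_\phi$ is weakly compact on $\lipo(X)$, and Theorem~\ref{teo1} then promotes this to compactness on $\lipo(X)$. Two applications of Schauder's theorem give the compactness of $C_\phi^{**}$ on $\lipo(X)^{**}$, and conjugating back by $\Phi$ yields the desired compactness of $C_\phi$ on $\Lipo(X)$. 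No real obstacle arises here: the substantive analytic work has been carried out in Theorem~\ref{teo1}, and the corollary is essentially a routine exercise in the bidual identification $\lipo(X)^{**}\cong\Lipo(X)$.
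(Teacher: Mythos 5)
Your argument is correct, and its first half is exactly the paper's: you establish the intertwining $\Phi\circ C_\phi^{**}=C_\phi\circ\Phi$ (the paper verifies the same identity in the form $\Phi^{-1}C_\phi\Phi=C_\phi^{**}$, using density of the span of the $\delta_x$ in $\lipo(X)^*$, whereas your direction gives an equality of functions on $X$ directly), deduce via Gantmacher that weak compactness of $C_\phi$ on $\Lipo(X)$ forces weak compactness of $C_\phi$ on $\lipo(X)$, and then invoke Theorem~\ref{teo1} to get compactness on $\lipo(X)$. Where you genuinely diverge is in the final step: the paper concludes by going back through the symbol, citing the $\lipo$ version of \cite[Theorem 1.3]{jv13} to get that $\phi$ is supercontractive and then \cite[Theorem 1.2]{jv13} to get compactness of $C_\phi$ on $\Lipo(X)$; you instead apply Schauder's theorem twice to pass from compactness of $C_\phi$ on $\lipo(X)$ to compactness of $C_\phi^{**}$ on $\lipo(X)^{**}$, and conjugate back by the isometric isomorphism $\Phi$ (compact operators being an operator ideal). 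Your finish is more self-contained and purely functional-analytic, needing no further appeal to the supercontractivity characterizations of compact composition operators; the paper's finish, on the other hand, makes the metric mechanism explicit and yields the supercontractivity of $\phi$ as a byproduct. Both are valid, and the substantive work in either case lies in Theorem~\ref{teo1}.
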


\begin{proof}
Assume that $C_{\phi}\colon\Lipo(X)\to\Lipo(X)$ is weakly compact. Hence $\Phi^{-1} C_{\phi}\Phi\colon \lipo(X)^{**}\to\lipo(X)^{**}$ is weakly compact. Given $F\in\lipo(X)^{**}$ and $x\in X$, we have
\begin{gather*}
(\Phi^{-1}C_{\phi}\Phi)(F)(\delta_x)=((C_{\phi}\Phi)(F))(x)=C_{\phi}(\Phi(F))(x)=\Phi(F)(\phi(x)) \\
=F(\delta_{\phi(x)})=F(\delta_x\circ C_{\phi})=(F\circ C^*_\phi)(\delta_x)=C^{**}_\phi(F)(\delta_x).
\end{gather*}
Since linear combinations of elements of the form $\delta_x$ are dense in $\lipo(X)^{*}$, 
it follows that $\Phi^{-1} C_{\phi}\Phi=C^{**}_\phi$. Hence $C^{**}_\phi\colon \lipo(X)^{**}\to\lipo(X)^{**}$ is weakly compact and so also is $C_\phi\colon \lipo(X)\to\lipo(X)$. Then, by Theorem \ref{teo1}, $C_\phi\colon \lipo(X)\to\lipo(X)$ is compact. Hence $\phi$ is supercontractive by the version of \cite[Theorem 1.3]{jv13} for $\lipo$ spaces, and finally $C_\phi\colon \Lipo(X)\to\Lipo(X)$ is compact by \cite[Theorem 1.2]{jv13}.
\end{proof}

\begin{remark}
We must point out that analogous versions of the preceding results can be established for composition operators $C_\phi$ from $\lipo(X)$ to $\lipo(Y)$ and from $\Lipo(X)$ to $\Lipo(Y)$. Moreover, every space $\Lip(X)$ is isometrically isomorphic to a certain space $\Lipo(X_0)$ by \cite[Theorem 1.7.2]{wea}, and the same isometric isomorphism identifies also the spaces $\lip(X)$ and $\lipo(X_0)$ (see \cite[p. 74]{wea}). Using these identifications, it is easy to show that Theorem \ref{teo1} and Corollary \ref{cor1} hold also for composition operators from $\lip(X)$ to $\lip(Y)$ and from $\Lip(X)$ to $\Lip(Y)$.
\end{remark}

\textbf{Acknowledgments.} The author thanks Nik Weaver for his valuable help in a first draft of this paper. 

\bibliographystyle{amsplain}

\begin{thebibliography}{99}

\bibitem{arens} R. F. Arens, The adjoint of a bilinear operation, Proc. Amer. Math. Soc., \textbf{2} (1951), 839--848. 
\bibitem{bcd}  W. G. Bade, P. C. Curtis, Jr. and H. G. Dales, Amenability and weak amenability for Beurling and Lipschitz algebras, Proc. London Math. Soc., \textbf{55} (3) (1987), 359--377.
\bibitem{bj11} F. Botelho and J. Jamison, Composition operators on spaces of Lipschitz functions, Acta Sci. Math. (Szeged), \textbf{77}, no. 3-4, (2011), 621--632. 
\bibitem{jllv13} A. Jim\'{e}nez-Vargas, Kristopher Lee, Aaron Luttman and Moisés Villegas-Vallecillos,
Generalized weak peripheral multiplicativity in algebras of Lipschitz functions, Cent. Eur. J. Math., \textbf{11}, (7) (2013), 1197--1211.
\bibitem{jv13} A. Jim\'{e}nez-Vargas and Mois\'{e}s Villegas-Vallecillos, Compact composition operators on noncompact Lipschitz spaces, J. Math. Anal. Appl., \textbf{398}, no. 1, (2013), 221--229. 
\bibitem{ks}  H. Kamowitz and S. Scheinberg, Some properties of endomorphisms of Lipschitz algebras, Studia Math., \textbf{96} (1990), 61--67.
\bibitem{sm} R. K. Singh and J. S. Manhas, Composition operators on spaces of functions, North-Holland Mathematics Studies, vol. 179, North-Holland, Amsterdam, 1993.
\bibitem{VLV09} Jim{\'e}nez-Vargas A., Luttman A., Villegas-Vallecillos M., Weakly peripherally multiplicative surjections of pointed Lipschitz algebras, Rocky Mountain J. Math. \textbf{40} (2010), 1903--1922
\bibitem{wea}  N. Weaver, Lipschitz algebras, World Scientific Publishing Co., River Edge, NJ, 1999.
\end{thebibliography}

\end{document}